\documentclass[12pt]{amsart}
\usepackage{amsfonts, amssymb, amsmath, amscd, graphics} 
\usepackage[all]{xy}

\addtolength{\voffset}{1cm}
\addtolength{\textwidth}{.6cm}

\newcommand{\C}{\mathbb{C}}
\newcommand{\tr}[1]{\mathrm{tr}(#1)}
\newcommand{\xb}{\mathbf{X}}
\newcommand{\yb}{\mathbf{Y}}
\newcommand{\zb}{\mathbf{Z}}

\newcommand{\ub}{\mathbf{U}}
\newcommand{\vb}{\mathbf{V}}
\newcommand{\wb}{\mathbf{W}}
\newcommand{\gt}{\mathtt{g}}

\newcommand{\aq}{/\!\!/}
\newcommand{\X}{\mathfrak{X}}
\newcommand{\Y}{\mathfrak{Y}}
\newcommand{\R}{\mathfrak{R}}

\newcommand{\SL}{\mathrm{SL}(3,\C)}

\newcommand{\F}{\mathtt{F}}
\newcommand{\G}{\mathfrak{G}}

\newcommand{\id}{\mathbf{I}}

\newcommand{\SLm}[1]{\mathrm{SL}(#1,\C)}
\newcommand{\GLm}[1]{\mathrm{GL}(#1,\C)}
\newcommand{\glm}[1]{\mathfrak{gl}(#1,\C)}

\newtheorem{thm}{Theorem}
\newtheorem{defn}[thm]{Definition}

\newtheorem{cor}[thm]{Corollary}
\newtheorem{rem}[thm]{Remark}
\newtheorem{prop}[thm]{Proposition}

\newtheorem{conj}[thm]{Conjecture}

\title[Algebraic Independence in Character Varieties]{Algebraic Independence in $\SL$ Character Varieties of Free Groups}
\author[S. Lawton]{Sean Lawton}
\address{Mathematics Department, University of Texas-Pan American}
\address{E-mail address: $\mathtt{lawtonsd@utpa.edu}$}
\address{URL: $\mathtt{http://www.math.utpa.edu/ \sim lawtonsd}$}
\date{\today}
\keywords{character variety, free group, algebraic independence}

\begin{document}

\bibliographystyle{amsalpha}

\maketitle

\begin{abstract}
Let $\X_r$ be the moduli space of $\SL$ representations of a free group of rank $r$.  In this paper we describe maximal algebraically independent 
subsets of certain minimal sets of coordinate functions on $\X_r$.  These subsets locally parametrize the moduli space.
\end{abstract}


\section{Introduction}
It is the purpose of this paper to describe local parameters (maximal independent coordinates) for the moduli space of $\SL$-representations of 
free groups of arbitrary rank.  We begin by reviewing a recent description of global coordinates of this moduli space (universal parameter space), 
and then apply the method of \cite{ATZ} to prove that certain subsets of these coordinate functions admit no (polynomial) relations.  
This establishes that these subsets are algebraically independent.  They are maximal with this property and at 
generic non-singular points provide local parameters that determine linearly independent tangent vectors.

Let $\F_r=\langle \gt_1,...,\gt_r\rangle$ be a rank $r$ free group (non-abelian).  Any homomorphism $\rho:\F_r\to \SLm{m}\subset\GLm{m}$ is a 
complex $m$ dimensional representation of $\F_r$.  We call the set $\R_r=\mathrm{Hom}(\F_r,\SLm{m})$ the $\SLm{m}$-\emph{representation variety of} 
$\F_r$.  The evaluation map, $$\rho\mapsto (\rho(\gt_1),...,\rho(\gt_r)),$$ gives a bijection between $\R_r$ and $\SLm{m}^{\times r}$.  
Since $\SLm{m}$ is a smooth affine variety (irreducible, non-singular, algebraic set), $\R_r$ is likewise a smooth affine variety.

Let $\G=\SLm{m}$.  The conjugation action of $\G$ on $\R_r$ is rational; that is, $\G\times \R_r\to \R_r$ is regular, or the mapping is by polynomials 
in the matrix entries of $\G$ in the coordinates of $\R_r$.  In particular, this action is either $(g,\rho)\mapsto g\rho g^{-1}$ or 
$$\left(g,\left(\rho(\gt_1),...,\rho(\gt_r)\right)\right)\mapsto \left(g\rho(\gt_1)g^{-1},...,g\rho(\gt_r)g^{-1}\right)$$ depending on whether we are 
working with $\mathrm{Hom}(\F_r,\G)$ or $\G^{\times r}$, respectively.  We often switch back and forth as is convenient.

Let $K$ be a compact Lie group.  Then $K$ is a real algebraic group which embeds in $\mathrm{O}(n,\mathbb{R})$ for some $n$.  
Since $K$ is algebraic there is an ideal $\mathfrak{I}$ in the real coordinate ring $\mathbb{R}[\mathrm{O}(n,\mathbb{R})]$ defining its points.  
Let $G$ be the complex zeros of $\mathfrak{I}$.  Then $G$ is a complex algebraic subgroup of $\mathrm{O}(n,\C)$ (denoted by $K_\C$ and called 
the \emph{complexification} of $K$) with coordinate ring $\C[G]=\mathbb{R}[K]\otimes_{\mathbb{R}}\C$.  Any complex algebraic group $G$
which arises in this fashion is called reductive.  The ``unitary trick'' shows $\SLm{m}$ is reductive.  We note that this definition, although not 
the most general, coincides with all more general notions of reductivity when the algebraic group is complex.  In particular, another 
equivalent definition is that a complex algebraic group $G$ is reductive if for every finite dimensional representation of $G$ all 
subrepresentations have invariant complements.  The important observation is that such groups act like and have the algebraic structure of 
compact groups.  See \cite{Sch2}.

A theorem of Nagata \cite{Na} says that if a reductive group acts on a finitely generated domain $A$, the the subdomain of invariants 
$A^G=\{a\in A\ |\ g\cdot a=a\}$ is likewise finitely generated.  This is one answer to Hilbert's fourteeth problem.

Since $\R_r$ is an affine variety, its coordinate ring $\C[\R_r]$ is a finitely generated domain, and since $\G$ acts on $\R_r$ it also acts 
on $\C[\R_r]$ by $(g,f(\rho))\mapsto f(g^{-1}\rho g)$.  Since $\mathrm{SU}(m)_\C=\SLm{m}$, $\G$ is reductive.  Thus $\C[\R_r]^{\G}$ is a finitely 
generated domain, and consequently we define $$\X_r=\mathrm{Spec}_{max}\left(\C[\R_r]^{\G}\right),$$ the set of maximal ideals, to be the 
$\G$-\emph{character variety of} $\F_r$.

It can be shown that $\X_r$ is the categorical quotient $\R_r\aq\G$ in the category of affine varieties (or Hausdorff spaces or complex analytic 
varieties \cite{Lu2,Lu3}).  We recall the definition of a categorical quotient to be concrete.

\begin{defn}
A {\it categorical quotient} of a variety $Y$ with an algebraic group $G$ acting rationally is an object $Y\aq G$ and a $G$-invariant morphism 
$\pi:Y\to Y\aq G$ such that the following commutative diagram exists 
uniquely for all invariant morphisms $f:Y\to Z$: $$ \xymatrix{
Y \ar[rr]^-{\pi} \ar[dr]_{f} & & Y\aq G \ar@{.>}[dl]\\
& Z &} $$
It is a {\it good} categorical quotient if the following conditions also hold:
\begin{enumerate}
\item[$(i)$] for open subsets $U\subset Y\aq G$, $\C[U]\approx \C[\pi^{-1}(U)]^{G}$;
\item[$(ii)$] $\pi$ maps closed invariant sets to closed sets;
\item[$(iii)$] $\pi$ separates closed invariant sets.
\end{enumerate}
\end{defn}

When $G$ is reductive and $Y$ is an affine variety, then $Y\to \mathrm{Spec}_{max}(\C[Y]^G)$ is a good categorical quotient.  
See \cite{Do} for details.

It is worth giving a more down to earth description of this quotient.  There is a one-to-one correspondence between the points of $\X_r$ and the 
orbits of {\it completely reducible} representations (representations that are sums of irreducibles); these are the points whose orbits are closed.  
Any representation can be continuously and conjugate-invariantly deformed to one that is completely reducible, so the points of $\X_r$ are unions 
of orbits of representations that are deformable in this way.  Such a union is called an {\it extended orbit equivalence class}.  The character 
variety $\X_r$ may be accurately thought of as either the usual orbit space of $\R_r$ with the non-completely reducible representations removed, 
or as the the usual orbit space with extended orbit equivalences.  Either way, the resulting space is, or is in one-to-one correspondence with, 
an affine algebraic set, irreducible and singular, that satisfies the diagrammatic requirements needed to be a categorical quotient.

Any such reductive quotient has an affine lift (see \cite{MFK}).  In otherwords, there is an affine space $\mathbb{A}^N$ for some potentially 
large $N$ where $\R_r\subset \mathbb{A}^N$ and where the action of $\G$ extends.  Then
$$\Pi:\C[\mathbb{A}^N]\longrightarrow \C[\R_r]$$ and more importantly
$$\Pi_{\G}:\C[\mathbb{A}^N\aq\G]\longrightarrow \C[\R_r\aq \G]$$ are surjective morphisms.  We may take $\mathbb{A}^N$ in our case to 
be $\mathfrak{gl}(m,\C)^{\times r}$ and the action of $\G$ to be, as it is on $\G^{\times r}$, diagonal conjugation.

The coordinate ring of this affine space is $$\C[\mathfrak{gl}(m,\C)^{\times r}]=\C[x^{k}_{ij}\ |\ 1\leq i,j \leq m, \ 1\leq k\leq r],$$ the 
complex polynomial ring in $rm^2$ variables.

Let $$\xb_k=\left(
\begin{array}{cccc}
x^k_{11} & x^k_{12} & \cdots & x^k_{1m}\\
x^k_{21} & x^k_{22}  &\cdots & x^k_{2m}\\
\vdots &\vdots & \ddots & \vdots\\
x^k_{m1} & x^k_{m2} &\cdots &x^k_{mm}\\
\end{array}\right)$$  be a \emph{generic matrix} of size $m\times m$.

In 1976 Procesi \cite{P1} proves

\begin{thm}[$1^{\text{st}}$ Fundamental Theorem of Invariants of $m\times m$ Matrices]\label{prfd1}
$$\C[\glm{m}^{\times r}\aq \G]=\C[\tr{\xb_{i_1}\xb_{i_2}\cdots \xb_{i_l}}\ |\ 1\leq l \leq d(m)],$$ where $d(m)$ is a fixed positive integer 
dependent only on $m$. 
\end{thm}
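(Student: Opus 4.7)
The plan is to prove this in two stages: first identify a generating set consisting of all traces of arbitrary-length monomials in the generic matrices, and second prove that monomials of length beyond a fixed bound $d(m)$ can be eliminated.

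For the first stage, I would use the classical polarization/restitution technique. Since the invariant ring is multi-graded under the action of the torus of scaling each $\xb_k$ independently, every invariant is a sum of invariants which are homogeneous of a fixed multi-degree $(d_1,\ldots,d_r)$. By polarization, a multi-homogeneous invariant of multi-degree $(d_1,\ldots,d_r)$ is determined by a unique multilinear invariant in $n=d_1+\cdots+d_r$ matrix variables (obtained by substituting and taking partial derivatives), and conversely any multi-homogeneous invariant is a restitution of a multilinear one. Hence it suffices to describe multilinear invariants of $\G$ acting on $\glm{m}^{\otimes n}$ by simultaneous conjugation.

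The heart of the argument is then Schur--Weyl duality. A multilinear $\G$-invariant function on $\glm{m}^{\otimes n}$ is the same data as a $\G$-equivariant linear map $(\C^m)^{\otimes n}\to (\C^m)^{\otimes n}$, and by Schur--Weyl the commutant of $\GLm{m}$ on this tensor power is spanned by the permutation action of the symmetric group $S_n$. Each permutation $\sigma\in S_n$ decomposes into cycles, and a direct calculation shows that the invariant corresponding to $\sigma$ evaluated on $(A_1,\ldots,A_n)$ is a product of traces, one per cycle of $\sigma$, each trace being a monomial $\tr{A_{i_1}\cdots A_{i_k}}$. Restituting back produces invariants expressible as polynomials in $\tr{\xb_{i_1}\cdots \xb_{i_l}}$, proving the generation statement with no \emph{a priori} bound on $l$.

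The main obstacle, and the deeper content of Procesi's theorem, is obtaining the uniform bound $d(m)$ depending only on $m$. For a single generic matrix the Cayley--Hamilton theorem writes $\xb^m$ as a polynomial in $\xb, \xb^2,\ldots,\xb^{m-1}$ with coefficients that are polynomials in the traces $\tr{\xb},\ldots,\tr{\xb^m}$. To extend this to several non-commuting generic matrices one polarizes the Cayley--Hamilton identity: differentiate the identity $\chi_\xb(\xb)=0$ in the direction of additional generic matrices $\yb_1,\ldots,\yb_{m-1}$ to obtain a multilinear identity expressing a long trace monomial in terms of shorter ones. Iterating this identity inside any trace monomial $\tr{\xb_{i_1}\cdots \xb_{i_l}}$ allows one to reduce the length $l$ step by step as soon as $l$ exceeds a bound depending only on $m$ (the classical value is $d(m)\le 2^m-1$, sharpened later by Razmyslov and others). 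This reduction, together with the generation result above, yields the theorem.
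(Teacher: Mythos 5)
The paper does not prove this statement: it is quoted as Procesi's First Fundamental Theorem, with citations to \cite{P1} and \cite{Ki}, so there is no in-paper argument to compare against. Your sketch is a faithful outline of the standard (Procesi) proof: reduction to multilinear invariants by polarization and restitution (valid here since we work over $\C$), identification of multilinear conjugation-invariants on $\glm{m}^{\times n}$ with the commutant of $\GLm{m}$ acting on $(\C^m)^{\otimes n}$ via the trace form, Schur--Weyl duality to see that this commutant is spanned by the permutations, and the cycle computation showing that the invariant attached to $\sigma$ is the product over its cycles of trace monomials. One point worth making explicit: Schur--Weyl duality concerns $\GLm{m}$, while the theorem is about $\G=\SLm{m}$-invariants; these coincide because $\GLm{m}$ is generated by $\SLm{m}$ together with scalar matrices, and scalars act trivially under conjugation. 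The only genuinely delicate step is the uniform bound $d(m)$: ``iterating the polarized Cayley--Hamilton identity'' is the right idea, but converting it into a bound on word length is precisely the Nagata--Higman theorem on nilpotency of nil algebras of bounded index, which yields $d(m)\leq 2^m-1$, later sharpened to $d(m)\leq m^2$ by Razmyslov as the paper itself notes. As an outline your proposal is correct; a complete write-up would need to supply the Nagata--Higman argument rather than gesture at it.
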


However, according to \cite{F} the above result had been discovered earlier in \cite{Ki}.

The number $d(m)$ is called the \emph{degree of nilpotency}.  The only values known are $d(2)=3$, $d(3)=6$ and $d(4)=10$, but it is known that 
$d(m)\leq m^2$ (see \cite{Ra}).  Therefore, 
$\C[\R_m\aq \G]$ is generated by $\Pi(\tr{\xb_{i_1}\xb_{i_2}\cdots \xb_{i_l}})=\tr{\widehat{\xb_{i_1}}\widehat{\xb_{i_2}}\cdots \widehat{\xb_{i_l}}}$ 
where $\widehat{\xb_k}=(\widehat{x^k_{ij}})$ and $\widehat{x_{ij}^k}=\Pi(x^k_{ij})\in \C[\R_m].$

In \cite{L4} minimal generators for $\C[\SLm{3}^{\times r}\aq \SLm{3}]$ are constructed, providing global coordinates for its spectrum of maximal 
ideals.  The main theorem from \cite{L4} is 

\begin{thm}\label{maintheorem}
$\C[\SL^{\times r}\aq \SL]$ is minimally generated by 
$\binom{r}{1}$ invariants of the form $\tr{\xb}$, 
$\binom{r}{1}$ invariants of the form $\tr{\xb^{-1}}$, 
$\binom{r}{2}$ invariants of the form $\tr{\xb\yb}$, 
$2\binom{r}{2}$ invariants of the form $\tr{\xb\yb^{-1}}$,
$\binom{r}{2}$ invariants of the form $\tr{\xb^{-1}\yb^{-1}}$,
$\binom{r}{2}$ invariants of the form $\tr{\xb\yb\xb^{-1}\yb^{-1}}$,
$2\binom{r}{3}$ invariants of the form $\tr{\xb\yb\zb}$, 
$6\binom{r}{3}$ invariants of the form $\tr{\xb\yb\zb^{-1}}$, 
$3\binom{r}{3}$ invariants of the form $\tr{\xb\yb\zb\yb^{-1}}$, 
$6\binom{r}{3}$ invariants of the form $\tr{\xb\yb^{-1}\zb^{-1}}$,
$6\binom{r}{3}$ invariants of the form $\tr{\xb\yb\zb^{-1}\yb^{-1}}$, 
$\binom{r}{3}$ invariants of the form $\tr{\xb^{-1}\yb^{-1}\zb^{-1}}$,
$5\binom{r}{4}$ invariants of the form $\tr{\wb\xb\yb\zb}$,
$20\binom{r}{4}$ invariants of the form $\tr{\wb\xb\yb\zb^{-1}}$,
$18\binom{r}{4}$ invariants of the form $\tr{\wb\xb\yb^{-1}\zb^{-1}}$, 
$8\binom{r}{4}$ invariants of the form $\tr{\wb\xb\yb\zb\yb^{-1}}$,
$12\binom{r}{5}$ invariants of the form $\tr{\ub\vb\wb\xb\yb}$,
$35\binom{r}{5}$ invariants of the form $\tr{\vb\wb\xb\yb\zb^{-1}}$, and
$15\binom{r}{6}$ invariants of the form $\tr{\ub\vb\wb\xb\yb\zb}$.
\end{thm}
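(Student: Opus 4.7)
The plan is to bootstrap from Procesi's First Fundamental Theorem (Theorem \ref{prfd1}) specialized to $m=3$ and pushed forward through the surjection $\Pi_\G$. This yields a redundant generating set of $\C[\SL^{\times r}\aq\SL]$ consisting of all traces $\tr{\widehat{\xb_{i_1}}\cdots\widehat{\xb_{i_l}}}$ with $1\le l\le d(3)=6$ and $i_j\in\{1,\ldots,r\}$; the task is then to reduce this set to the exact list in the theorem and to verify that the reduction is sharp in both directions.

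The reduction rests on three families of identities. First, cyclicity of the trace collapses each word-class to its equivalence class under cyclic rotation. Second, every $A\in\SLm{3}$ satisfies $A^3-\tr{A}A^2+\tr{A^{-1}}A-I=0$ (using $\det(A)=1$ together with the fact that the sum of $2\times 2$ principal minors of $A$ equals $\tr{A^{-1}}$); multiplying this matrix identity by arbitrary words, taking traces, and then polarizing---replacing $A$ by $\sum t_i B_i$ and extracting coefficients in the $t_i$---produces multilinear trace relations that rewrite every occurrence of $A^2$ in terms of shorter pieces in $A^{\pm 1}$. Third, reading the same identity as $A^{-1}=A^2-\tr{A}A+\tr{A^{-1}}I$ converts Procesi's positive-power word basis into the signed word basis $\tr{\xb_{i_1}^{\pm 1}\cdots\xb_{i_l}^{\pm 1}}$ appearing in the statement.

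After these substitutions I would enumerate systematically. For each $k\in\{1,\ldots,6\}$ the factor $\binom{r}{k}$ counts the subsets of generators on which a word can be supported, and the integer multipliers $1,2,3,5,6,8,12,15,18,20,35$ must then match a direct count of cyclic classes of signed words of the given syllable type on such a $k$-element support, modulo the identities above. To confirm that the list generates, I would prove by induction on word length that every cyclic class of Procesi word of length $\le 6$ reduces modulo these identities to a polynomial in the listed traces.

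Minimality is the subtler half. For this I would specialize to the diagonal torus, restricting each $\widehat{\xb_i}$ to $\mathrm{diag}(\lambda_{i,1},\lambda_{i,2},\lambda_{i,3})$ with $\lambda_{i,1}\lambda_{i,2}\lambda_{i,3}=1$. Each listed trace becomes a Laurent polynomial in the $\lambda_{i,j}$, symmetric under the Weyl group acting on each triple. A refined lex order on exponent vectors, together with the expectation that each listed generator's leading monomial lies in a distinct region of the $\mathbb{Z}^{3r}$-grading, should force the listed set to be irredundant: no generator can be a polynomial in the others because its leading monomial cannot be matched. The principal obstacle, and where the bulk of the labor lives, is the simultaneous bookkeeping: one must rule out every hidden relation among the listed generators (else the list is non-minimal) and every hidden generator not on the list (else the list fails to generate). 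For words of length up to $6$ in $r$ variables with inverses this case analysis is long and delicate, and would be the hardest part of writing out a full proof.
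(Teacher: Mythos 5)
First, a point of orientation: this paper does not prove Theorem \ref{maintheorem} at all --- it is quoted verbatim from \cite{L4} --- so your proposal has to be measured against the proof there. Your skeleton for the \emph{generation} half is the right one and matches the lineage of that proof: push Procesi's generators of $\C[\glm{3}^{\times r}\aq\G]$ (words of length at most $d(3)=6$) through $\Pi_\G$, use the Cayley--Hamilton identity $A^3-\tr{A}A^2+\tr{A^{-1}}A-\id=0$ (correct, since $\det A=1$ turns the second coefficient into $\tr{A^{-1}}$), polarize, and read the same identity backwards to trade $A^2$ for $A^{-1}$. The caveat is that \cite{L4} does not carry out the reduction by raw enumeration of cyclic classes of signed words: it leans on the Abeasis--Pittaluga classification \cite{AP} of minimal generators of the trace algebra of generic $3\times 3$ matrices and on Teranishi's Hilbert series computations \cite{T3} to control, multidegree by multidegree, how many generators survive. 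Reproducing the coefficients $1,2,3,5,6,8,12,15,18,20,35$ by hand without that input is not a bookkeeping nuisance but the actual content of a long paper.

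The genuine gap is in your minimality argument. Specializing to the diagonal torus cannot certify irredundancy, because diagonal matrices commute and the restriction map is badly non-injective on the invariant ring. Concretely: on the torus $\tr{\xb\yb\zb}$ and $\tr{\xb\zb\yb}$ become the \emph{same} Laurent polynomial, yet both are minimal generators (this is precisely why the theorem lists $2\binom{r}{3}$ invariants of the form $\tr{\xb\yb\zb}$); worse, $\tr{\xb_1\xb_2\xb_1^{-1}\xb_2^{-1}}$ restricts to the constant $3$, yet it is a minimal generator (Section 4, part $(iii)$). So no leading-monomial argument on the torus restriction can separate these generators, and your method would wrongly declare half the list redundant. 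The standard repair is to work with the multigrading of $\C[\glm{3}^{\times r}]^{\G}$ by degree in each matrix, identify minimal generators with a basis of $N/N^2$ for the augmentation ideal $N$ (compared against the Poincar\'e series), and then show separately that minimal generators of the graded algebra upstairs remain minimal after imposing $\det(\xb_i)=1$; this last descent step is itself one of the main technical points of \cite{L4} and is absent from your sketch.
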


Counting the various generator types we conclude

\begin{cor}
The number of minimal generators for $\C[\SL^{\times r}]^{\SL}$ is $$N_r=\frac{r}{240}\left(396+65r^2-5r^3+19r^4+5r^5\right),$$ and enumerating the 
set of generators from Theorem \ref{maintheorem} by $\{t_1,...,t_{N_r}\}$ we have
$$\mathcal{T}_{N_r}=(t_1,...,t_{N_r}):\X_r\hookrightarrow \C^{N_r}$$ is an affine embedding where $N_r$ is minimal among all affine embeddings 
$\X_r\to \C^{N}$.
\end{cor}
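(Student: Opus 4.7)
The plan splits cleanly into two parts: a combinatorial identity for $N_r$ and a formal scheme-theoretic consequence giving the embedding. Neither is deep, and the real work is arithmetic.

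For the count, I will group the invariants listed in Theorem \ref{maintheorem} by the number of free-group letters $k$ (equivalently, by which $\binom{r}{k}$ appears) and sum the integer multipliers:
\begin{align*}
c_1 &= 1+1 = 2, & c_2 &= 1+2+1+1 = 5, & c_3 &= 2+6+3+6+6+1 = 24,\\
c_4 &= 5+20+18+8 = 51, & c_5 &= 12+35 = 47, & c_6 &= 15.
\end{align*}
It then suffices to verify the polynomial identity
\[
2\binom{r}{1}+5\binom{r}{2}+24\binom{r}{3}+51\binom{r}{4}+47\binom{r}{5}+15\binom{r}{6}=\frac{r}{240}\bigl(396+65r^2-5r^3+19r^4+5r^5\bigr).
\]
I would confirm this by clearing $240$ (a common multiple of the denominators $1, 2, 6, 24, 120, 720$), expanding each falling factorial $r(r-1)\cdots(r-k+1)$, and matching coefficients of $r, r^2, \ldots, r^6$ on both sides. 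The vanishing of the $r^2$-coefficient on the right is a convenient internal sanity check, and small-$r$ evaluations $N_1=2$, $N_2=9$, $N_3=45$, $N_4=185$ provide additional assurance against arithmetic slips.

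For the embedding and its minimality, the argument is formal. Since the $N_r$ invariants generate $\C[\X_r]$ as a $\C$-algebra, the surjection $\C[y_1,\ldots,y_{N_r}]\twoheadrightarrow \C[\X_r]$ defined by $y_i\mapsto t_i$ corresponds under $\mathrm{Spec}_{max}$ to a closed immersion $\mathcal{T}_{N_r}:\X_r\hookrightarrow \C^{N_r}$. Conversely, any affine embedding $\iota:\X_r\hookrightarrow \C^N$ pulls the $N$ ambient coordinate functions back to a $\C$-algebra generating set of $\C[\X_r]$; by the minimality clause of Theorem \ref{maintheorem} this forces $N\geq N_r$. The only real obstacle anywhere is the careful coefficient bookkeeping in the polynomial identity; no new geometric idea is required beyond what Theorem \ref{maintheorem} already supplies.
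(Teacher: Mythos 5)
Your proposal is correct and follows the same route as the paper, whose entire proof of this corollary is the phrase ``counting the various generator types'': the grouped coefficients $2,5,24,51,47,15$ and the resulting identity $\sum_k c_k\binom{r}{k}=\frac{r}{240}(396+65r^2-5r^3+19r^4+5r^5)$ check out (both sides are degree-$6$ polynomials vanishing at $r=0$ and agreeing at $r=1,\dots,6$, e.g.\ $N_5=602$, $N_6=1629$), and the embedding statement is, as you say, the formal translation of ``generators give a closed immersion'' plus the minimality clause of Theorem \ref{maintheorem}. The only point worth making explicit is that your final inference $N\geq N_r$ uses minimality of the generating set in the strong sense (every generating set of $\C[\X_r]$ has at least $N_r$ elements, as established in \cite{L4} via the cotangent space at the augmentation ideal), not merely that no proper subset of the listed set generates; with that reading, which is the one the paper intends, the argument is complete.
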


\section{Algebraic Independence}

Using methods from \cite{ATZ} (see also \cite{T3}) we pick out maximal subsets of these minimal generators that are algebraically independent. 

\begin{defn}
Let $k\subset K$ be fields and $B\subset K$ be a set of elements.  Then the set $B$ is algebraically independent over $k$ if for any positive integer 
$n$, any nonzero polynomial $f(x_1,...,x_n)$ with coefficients in $k$, and any set of $n$ distinct elements in $B$, denoted by $b_1,...b_n$, we have $f(b_1,...,b_n)\not=0$.
\end{defn}

Any maximal set of algebraically independent elements has the same cardinality (the \emph{transcendence degree}) and such a maximal set is called 
a \emph{transcendence basis}.  

For an affine variety $X$ over $\C$, the dimension of $X$ (called the {\it Krull dimension}) is equal to the maximal number of independent rational 
functions on $X$; that is, the transcendence degree of the quotient field $\C(X)$ over $\C$.  In this case, it is also equal to the common length of 
all maximal chains of prime ideals in the coordinate ring $\C[X]$, and also to the dimension of a tangent space at a smooth point.  

Suppose the transcendence basis $\{f_1,...,f_d\}$ is in fact in the coordinate ring $\C[X]\subset \C(X)$.  In general a set of funtions whose
differentials give a basis for the cotangent space at a point is called a set of {\it local parameters}.  In \cite{Sh1} it is shown that 
local parameters $\{x_1,...,x_d\}$ exist and that a set of functions $\{f_1,..,f_d\}$ in the local coordinate ring can be written in their terms.  
The Jacobian is the determinant of the $d\times d$ matrix $(\partial f_i/\partial x_j)$.  If the Jacobian is not identically zero on $X$, then at a 
generic smooth point $p$ (themselves generic in $X$) the differentials $d(f_i-f_i(p))$ of these coordinate functions, translated to $p$, give a basis 
for $T^*_p(X)$ and by duality for $T_p(X)$.  Any such set will necessarily be algebraically independent.

Since irreducible representations $\F_r\to \G$ are generic in $\R_r$ (for $r\geq 2$), it follows that generic orbits have dimension equal to 
$\mathrm{dim}\G=m^2-1$.  Thus the Krull dimension of $\X_r$ is $(m^2-1)(r-1)$, as long as $r\geq 2$.  It then follows that at a 
non-singular point $[\rho]$ of $\X_r$ (these are exactly the irreducibles) we wish to obtain $(m^2-1)(r-1)$  
generators of $\C[\X_r]$ whose Jacobian is non-zero on $\X_r$.  This generically provides $(m^2-1)(r-1)$ linearly independent vectors 
in $T_{[\rho]}(\X_r)$ and establishes these invariants are algebraically independent.

For the remainder of this section let $\G=\SLm{3}$.

Our main theorem is 

\begin{thm}\label{indtheorem}
Let $\mathcal{A}=\{\tr{\xb_1},\tr{\xb_2},\tr{\xb_1^{-1}},\tr{\xb_2^{-1}},\tr{\xb_1\xb_2}, \tr{\xb_1^{-1}\xb_2},\\ \tr{\xb_2^{-1}\xb_1},\tr{\xb_1^{-1}\xb_2^{-1}}\}$, and  $\mathcal{B}=\{\tr{\xb_3},\tr{\xb_3^{-1}},\tr{\xb_1\xb_3},\tr{\xb_2\xb_3},\\ \tr{\xb_1\xb_3^{-1}},...,\tr{\xb_r},\tr{\xb_r^{-1}},\tr{\xb_1\xb_r},\tr{\xb_2\xb_r},\tr{\xb_1\xb_r^{-1}}\}$.
Then $\mathcal{A}\cup\mathcal{B}\cup\mathcal{C}$ are local parameters from the minimal generators from Theorem \ref{maintheorem}, where $\mathcal{C}$ is any one of:
\begin{align*}
&\{\tr{\xb_1^{-1}\xb_3^{-1}},\tr{\xb_1^{-1}\xb_3},\tr{\xb_2^{-1}\xb_3^{-1}},...,\tr{\xb_1^{-1}\xb_r^{-1}},\tr{\xb_1^{-1}\xb_r}, \tr{\xb_2^{-1}\xb_r^{-1}} \}\\
& \{ \tr{\xb_1^{-1}\xb_3^{-1}}, \tr{\xb_1^{-1}\xb_3}, \tr{\xb_2^{-1}\xb_3},..., \tr{\xb_1^{-1}\xb_r^{-1}}, \tr{\xb_1^{-1}\xb_r}, \tr{\xb_2^{-1}\xb_r} \}\\
& \{ \tr{\xb_1^{-1}\xb_3^{-1}}, \tr{\xb_1^{-1}\xb_3}, \tr{\xb_2\xb_3^{-1}},...,
\tr{\xb_1^{-1}\xb_r^{-1}}, \tr{\xb_1^{-1}\xb_r}, \tr{\xb_2\xb_r^{-1}} \}\\
& \{ \tr{\xb_1^{-1}\xb_3},\tr{\xb_2^{-1}\xb_3}, \tr{\xb_2\xb_3^{-1}},..., 
\tr{\xb_1^{-1}\xb_r},\tr{\xb_2^{-1}\xb_r}, \tr{\xb_2\xb_r^{-1}} \}\\
&\{ \tr{\xb_1^{-1}\xb_3^{-1}}, \tr{\xb_2\xb_3^{-1}}, \tr{\xb_2^{-1}\xb_3^{-1}},...,
\tr{\xb_1^{-1}\xb_r^{-1}}, \tr{\xb_2\xb_r^{-1}}, \tr{\xb_2^{-1}\xb_r^{-1}} \} .
\end{align*}
In all cases these sets number the Krull dimension of $\X_r$ which is $8r-8$. Consequently, they are maximally algebraically independent in $\C[\X_r]$.
\end{thm}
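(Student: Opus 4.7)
The plan is to invoke the Jacobian criterion laid out just before the theorem statement: since $\X_r$ is irreducible of Krull dimension $8r-8$ with the orbits of irreducible representations as its generic smooth points, it suffices to exhibit an irreducible $\rho=(A_1,\ldots,A_r)\in\R_r$ at which the differentials of the proposed $8r-8$ trace functions are linearly independent on $T_{[\rho]}\X_r$. We first verify the count: $|\mathcal{A}|=8$, $|\mathcal{B}|=5(r-2)$, and $|\mathcal{C}|=3(r-2)$, totaling $8r-8$, and for each $k\in\{3,\ldots,r\}$ exactly eight of the listed functions (five from $\mathcal{B}$ and three from $\mathcal{C}$) contain the letter $\xb_k^{\pm1}$.

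The key simplification is a block lower-triangular structure of the Jacobian. Identifying $T_\rho\R_r\cong\slm{3}^{\oplus r}$ via left translation, a tangent vector has the form $(\xi_1 A_1,\ldots,\xi_r A_r)$ with $\xi_j\in\slm{3}$, and the partial derivative of $\tr{w(\xb)}$ in the $\xi_k$-slot vanishes whenever $\xb_k^{\pm1}$ does not occur in $w$. Ordering rows as (the eight functions of $\mathcal{A}$; then, for $k=3,\ldots,r$, the eight functions of $\mathcal{B}\cup\mathcal{C}$ involving $\xb_k$) and columns as $(\xi_1,\xi_2,\xi_3,\ldots,\xi_r)$, the Jacobian becomes zero above the diagonal past the $(\xi_1,\xi_2)$-columns. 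Restricting the $(\xi_1,\xi_2)$-block to a complement of the $8$-dimensional image of the infinitesimal $\G$-action yields a square $(8r-8)\times(8r-8)$ matrix whose determinant factors as the determinant of the $r=2$ block (the eight functions of $\mathcal{A}$ against the cotangent space of $\X_2$) times the $r-2$ determinants of the $8\times 8$ diagonal blocks indexed by $k$.

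The $r=2$ block is essentially the classical Fricke-type coordinate system for the $\SL$ character variety of the rank-two free group, and we verify non-singularity by choosing an explicit irreducible pair $(A_1,A_2)$---say $A_1$ diagonal with distinct eigenvalues and $A_2$ with off-diagonal entries making the representation irreducible---then evaluating the resulting $8\times 8$ determinant. For each $k\ge 3$ block, standard differentiation gives $d\tr{\xb_i^{\varepsilon}\xb_k^{\delta}}|_\rho(\xi_k A_k)=\varepsilon\delta\,\tr{M_{i,\varepsilon,\delta}\,\xi_k}$ with an explicit matrix $M_{i,\varepsilon,\delta}\in\glm{3}$ built from $A_1,A_2,A_k$ (for example $M_{1,+,+}=A_k A_1$ and $M_{1,+,-}=A_1 A_k^{-1}$), and $d\tr{\xb_k^\sigma}|_\rho(\xi_k A_k)=\sigma\,\tr{A_k^\sigma\xi_k}$ for $\sigma\in\{+1,-1\}$. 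Under the trace pairing on $\slm{3}$, non-vanishing of the $8\times 8$ block is equivalent to the linear independence modulo $\C\cdot I$ of the corresponding eight matrices in $\glm{3}$, with the precise eight-tuple depending on which $\mathcal{C}$ is chosen.

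The main obstacle is this final check, one per listed choice of $\mathcal{C}$: at a carefully selected irreducible triple $(A_1,A_2,A_k)$, show that the associated eight $3\times 3$ matrices span a $9$-dimensional subspace of $\glm{3}$ containing $I$. This reduces to five separate $8\times 8$ non-vanishing determinantal conditions, each of which we expect to verify by direct symbolic or numerical computation at generic entries; since the five candidate $\mathcal{C}$'s share six of the eight matrices in common, the computations can be carried out in parallel. Once each diagonal block is non-singular at a common $\rho$, the block lower-triangular factorization yields a nonzero full Jacobian, algebraic independence follows, and maximality is automatic because $8r-8$ equals the Krull dimension of $\X_r$.
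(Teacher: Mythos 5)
Your proposal is correct and follows essentially the same route as the paper: a block lower-triangular Jacobian whose determinant factors into the rank-two block (which the paper handles by citing \cite{L2}) times $r-2$ identical $8\times 8$ blocks, each verified non-singular by explicit (computer) evaluation at a generic point, with maximality following from the count $8r-8=\dim\X_r$. The only cosmetic difference is that you work in the Lie-algebra trivialization of the tangent space and quotient by the infinitesimal conjugation action, whereas the paper gauge-fixes by conjugation and selects $8r-8$ explicit matrix entries as independent coordinates.
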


\begin{proof}

The outline of the proof is as follows.  We will proceed by induction.  For $r=1$ the number of minimal generators equals the dimension of $\X_1$ so there cannot be any relations at all, and for $r=2$ Theorem \ref{indtheorem} was shown earlier in \cite{L2}.
For $r\geq 3$ we calculate the Jacobian matrix of the $8r-8$ trace functions is the statement of Theorem \ref{indtheorem} in the following $8r-8$ independent variables:
\begin{enumerate} 
\item[(a)] $x_{11}^1,x_{22}^1$ from $\xb_1$
\item[(b)]$x^2_{11}, x^2_{21}, x^2_{13}, x^2_{22}, x^2_{23}, x^2_{33}$ from $\xb_2$
\item[(c)]$x^k_{11}, x^k_{12}, x^k_{13}, x^k_{21}, x^k_{22}, x^k_{23}, x^k_{32}, x^k_{33}$ from $\xb_k$ for $3\leq k\leq r$.
\end{enumerate}
Call the set of matrix elements from the above list $\mathcal{V}$.  We then show the determinant of the Jacobian is generically non-zero.  This will establish independence.

We now proceed with the proof.  First, we justify our choice of matrix elements $\mathcal{V}\subset\{x^k_{ij}\}$ by showing them to be independent.  Generically, we can assume that $\xb_1$ is diagonalizable and conjugate it into diagonal form.  Since its determinant is 1, we may write $x_{33}^1=1/(x_{22}^1x_{11}^1)$.  Therefore, we need only $x^1_{11}$ and $x^1_{22}$ from $\xb_1$.

We may still conjugate by any matrix that preserves this normal form, such as by diagonal matrices.  Doing so we will show we can assume the conjugation orbit of $\xb_2$ is independent of its lower diagonal.  The entry $x^2_{31}$ is generically a function of the other matrix elements from $\xb_2$ since we may solve for it in the expression $\mathrm{det}(\xb_2)=1$.  In any event we can always choose either this entry or an element other than the lower diagonal to solve the determinant.  Now we are free to show that the lower diagonal may be assumed to be parameters that vary only in the conjugation orbit.  In deed, conjugate by
$\mathbf{D}=\left(
\begin{array}{lll}
 s & 0 & 0 \\
 0 & x_{21}^2 & 0 \\
 0 & 0 & \frac{x^2_{21} x^2_{32}}{t}
\end{array}
\right)$.  This matrix has non-zero determinant in the free $\C^*$ parameters $s$ and $t$ as long as the lower diagonal of $\xb_2$ is non-zero (generically the case).  Otherwise, the lower diagonal is fixed or we can slightly change the matrix $\mathbf{D}$ to take advantage of one of the zeros on the lower diagonal (when there is {\it only} one zero).

Then $\mathbf{D}^{-1}\xb_2\mathbf{D}=\left(
\begin{array}{lll}
 x^2_{11} & \frac{x^2_{12} x^2_{21}}{s} & \frac{x^2_{13} x^2_{21} x^2_{32}}{s t} \\
 s & x^2_{22} & \frac{x^2_{23} x^2_{32}}{t} \\
 \frac{x^2_{31} s t}{x^2_{21} x^2_{32}} & t & x^2_{33}
\end{array}
\right)$.  As $s$ and $t$ vary, $\xb_1$ is fixed (recall it is now diagonal) and the orbit of $\xb_2$ is fixed.

Thus performing this change of variables shows that we may assume without loss of generality that $x_{11}^1,x_{22}^1, x^2_{ij}\ (1\leq i\leq j\leq 3)$ are independent variables for $\X_2$.  Now for $k\geq 3$ we can likewise solve $\mathrm{det}(\xb_k)=1$ for $x^k_{31}$ (generically).  We thus conclude that 
$x^k_{ij}$, where $1\leq i,j \leq 3$ and $(i,j) \not= (3,1)$ for $k\geq3$, provide the additional independent parameters for $\X_r$ when $r\geq 3$.  

Heuristically, their independence follows since they total $8r-8=\mathrm{dim}\X_r$ and the conjugation action (generically having orbits of dimension 8) has been entirely accounted for (6 degrees of freedom used on $\xb_1$ and 2 degrees of freedom used on $\xb_2$).  A more precise way to say this is that the order $8r-8$ subset of polynomial indeterminates $\mathcal{V}\subset\{x^k_{ij}\}\subset \C[\R]$ are independent in $\C[\R]$, and so are independent in $\C[\X]=\C[\R]^{\G}$ as long as they (generically) distinguish orbits.  Since we just (constructively) showed that this set does generically distinguish all orbits (their closures to be precise), then we have shown independence.  The set $\mathcal{V}$ is a set of parameters we can now work with.

We now say a word or two about the Jacobian matrix.  The functions $\mathcal{A}\cup\mathcal{B}\cup\mathcal{C}$ are generically functions of the variables $\mathcal{V}$. Letting $f_1,..,f_{8r-8}$ be the trace functions from $\mathcal{A}\cup\mathcal{B}\cup\mathcal{C}$ and $z_1,...,z_{8r-8}$ be the variables $\mathcal{V}$, the Jacobian is the $(8r-8) \times (8r-8)$ matrix of partial derivatives $(\frac{\partial f_i}{\partial z_j})$.  If there is a dependence relation among the $f_i$'s then locally (that is on an open affine subset) the Jacobian will have zero determinant since locally these functions cannot give a full dimensional tangent space.  So computing this determinant and finding it non-zero at a general point in a Zariski open set shows the functions are algebraically independent.

We now proceed with the induction.  The cases $r=1,2$ are done.  Suppose now that $r_0\geq 3$ and that for all $r<r_0$ the Jacobian is non-singular.

Putting the $8$ trace functions which are in terms of $\xb_{r_0}$ in the last $8$ rows and the 8 variables $x_{ij}^{r_0}$ in the last 8 columns we get a block diagonal matrix $\left(
\begin{array}{cc}
 M & 0  \\
 P & N 
\end{array}
\right)$ and so by induction $M$ is non-singular.  The $8(r-2)\times 8$ block of zeros arises since the first $8(r-2)$ trace functions are constant with respect to $\xb_{r_0}$ and the last $8$ columns come from differentiating with respect to the indeterminates $x_{ij}^{r_0}$.  The block form of the matrix implies that its determinant is the product of the determinant of $M$ and that of $N$.  It remains to show that the last eight traces are independent in the variables from the last matrix $\xb_r$; that is, the $8\times 8$ matrix $N$ is non-singular (generically).

For all choices of $\mathcal{C}$, using {\it Mathematica} (code is available upon request), we calculate this subdeterminant and evaluate at random unimodular matrices (those with determinant 1); finding it non-zero.  If there was a relation the determinant would be identically zero and so any non-zero evaluation shows independence. We note that we only need to test the $r=3$ case since all $8\times 8$ lower right blocks $N$ are identical excepting labels.\end{proof}

\begin{rem}
Using the same method as above it is not hard to prove that the natural affine lifts $($replace exponents of $-1$ with exponents of $2)$ of the generators in the main theorem with $\{\tr{\xb_1^3},...,\tr{\xb_r^3}\}$ added to the sets form a maximal algebraically independent set for $\mathfrak{gl}(3,\C)^{\times r}\aq \G$.  For instance,
\begin{align*}&\{\tr{\xb_1},\tr{\xb_2},\tr{\xb_1^{2}},\tr{\xb_2^{2}},\tr{\xb_1^{3}},\tr{\xb_2^{3}},\tr{\xb_1\xb_2},\tr{\xb_1^{2}\xb_2},\tr{\xb_2^{2}\xb_1},\\ &\tr{\xb_1^{2}\xb_2^{2}}, \tr{\xb_3},\tr{\xb_3^{2}},\tr{\xb_3^{3}},\tr{\xb_1\xb_3},\tr{\xb_2\xb_3},
\tr{\xb_1\xb_3^{2}},\tr{\xb_1^{2}\xb_3^{2}},\\ &\tr{\xb_1^{2}\xb_3}, \tr{\xb_2^{2}\xb_3^{2}},...,\tr{\xb_r},\tr{\xb_r^{2}},\tr{\xb_r^{3}},\tr{\xb_1\xb_r},\tr{\xb_2\xb_r},\tr{\xb_1\xb_r^{2}},\\ &\tr{\xb_1^{2}\xb_r^{2}},\tr{\xb_1^{2}\xb_r}, \tr{\xb_2^{2}\xb_r^{2}}\}, \text{ are independent and maximal.} \end{align*}
\end{rem}

\section{Pulling Back Parameters and the Magnus Trace Map}
In this section, let $\mathfrak{Y}_r=\glm{m}^{r}\aq \SLm{m}$, $\X_r=\SLm{m}^r\aq \SLm{m}$, and $$\Pi_{\G}:\C[\mathfrak{Y_r}]\to \C[\X_r]\approx \C[\mathfrak{Y}_r]/(\mathrm{det}(\xb_1)-1,...,\mathrm{det}(\xb_r)-1)$$ be the projection discussed in the introduction.

A set of generators for $\C[\Y_r]$ or $\C[\X_r]$ of the form $\{\tr{A_{i_1}A_{i_2}\cdots A_{i_j}}\}$ are called {\it Procesi generators}, if additionally no generator has the form $\tr{\wb_1\xb^m\wb_2}$ where at least one of the words $\wb_i$ is not the identity (for $\C[\X_r]$ we additionally require this if both $\wb_i=\id$).  Using the characteristic polynomial $\sum c_k(\xb)\xb^{n-k}=0$ one can always arrange for any set of minimal generators of $\C[\Y_r]$ or $\C[\X_r]$ to be Procesi generators.  We will call a maximal set of algebraically independent Procesi generators {\it Procesi parameters}.  Such a maximal set always has order equal to the Krull dimension of $\Y_r$ or $\X_r$, respectively.

In \cite{L4} we show that minimal Procesi generators of $\C[\glm{m}^{\times r}\aq \SLm{m}]$ project by $\Pi_{\G}$ to minimal generators of $\C[\SLm{m}^{\times r}\aq \SLm{m}]$.  One may show that a set of algebraically independent Procesi generators of $\SLm{2}^{\times r}\aq \SLm{2}$ pull back by $\Pi_{\SLm{2}}$ to algebraically independent generators of $\glm{2}^{\times r}\aq \SLm{2}$, and the above work and remark from the last section shows the same is true for $\SLm{3}$.

We now prove 

\begin{thm}Let $B\subset \C[\X_r]$ be a set of Procesi parameters for $\X_r$.  Then there is a collection of elements $\overline{B}\subset \Pi_{\G}^{-1}(B)$ so that $\{\tr{\xb_1^m},...,\tr{\xb_r^m}\}\cup \overline{B}\subset \C[\Y_r]$ is a set of Procesi parameters for $\Y_r$.  Said shortly, $\pi$ pulls back Procesi parameters.
\end{thm}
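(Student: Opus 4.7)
The plan is to lift $B$ to a set $\overline{B}$ of Procesi generators of $\C[\Y_r]$ via the Cayley-Hamilton substitution from the remark after Theorem~\ref{indtheorem} (replace every exponent $-1$ by $m-1$), verify the count $|\overline{B}|+r=(m^2-1)(r-1)+r=m^2(r-1)+1=\dim\Y_r$, and establish algebraic independence; maximality and the Procesi-parameter conclusion then follow automatically. Algebraic independence of $\overline{B}$ alone is immediate, since any polynomial relation among the $\overline{b}_i$ in $\C[\Y_r]$ would project via $\Pi_{\G}$ to a polynomial relation among the $b_i$ in $\C[\X_r]$, contradicting the hypothesis on $B$.

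The main step is to adjoin the $r$ functions $\tr{\xb_k^m}$ while preserving algebraic independence. I would use the scaling torus $T=(\C^*)^r$ acting on $\glm{m}^{\times r}$ by $t\cdot(\xb_1,\ldots,\xb_r)=(t_1\xb_1,\ldots,t_r\xb_r)$; this commutes with $\SLm{m}$-conjugation, descends to $\Y_r$, and the scaling map $\phi\colon T\times\X_r\to\Y_r$, $(t,x)\mapsto t\cdot x$, is dominant onto the open locus where no $\det\xb_k$ vanishes. Each lifted generator $\overline{b}_i=\tr{\xb_{j_1}\cdots\xb_{j_l}}$ is a $T$-semi-invariant of weight $\omega_i\in\mathbb{N}^r$ equal to its occurrence vector, while $\det\xb_k$ is a pure $T$-eigenfunction of weight $me_k$. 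Working first with $\det\xb_k$ in place of $\tr{\xb_k^m}$ makes the weight analysis clean: pulling back a hypothetical relation $P(\overline{B},\det\xb_1,\ldots,\det\xb_r)=0$ through $\phi$ and decomposing by $T$-weight, the constraint $\sum_i\alpha_i\omega_i+m\beta=\lambda$ uniquely determines $\beta$ from $\alpha$ (since $\omega_i\in\mathbb{N}^r$), so each weight component collapses to a polynomial relation in $B$ alone in $\C[\X_r]$, which the algebraic independence of $B$ kills. Hence $\overline{B}\cup\{\det\xb_1,\ldots,\det\xb_r\}$ is a transcendence basis of $\C(\Y_r)$.

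The final step transfers this to $\{\tr{\xb_k^m}\}$ via Newton's identity $\tr{\xb_k^m}=(-1)^{m-1}m\,\det\xb_k+Q_k(\tr{\xb_k},\ldots,\tr{\xb_k^{m-1}})$ with nonzero leading coefficient. The substitution $-1\mapsto m-1$ used in constructing $\overline{B}$ ensures that the traces $\tr{\xb_k^j}$ for $1\le j\le m-1$ lie in $\C[\overline{B}]$ (up to the Cayley-Hamilton relations already present in $B$), so $Q_k\in\C[\overline{B}]$ and one can solve $\det\xb_k=(\tr{\xb_k^m}-Q_k)/((-1)^{m-1}m)\in\C(\overline{B},\tr{\xb_k^m})$. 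Consequently $\C(\overline{B},\tr{\xb_1^m},\ldots,\tr{\xb_r^m})$ contains the transcendence basis $\overline{B}\cup\{\det\xb_1,\ldots,\det\xb_r\}$, so it too is a transcendence basis and its $N+r$ generators are algebraically independent. The main obstacle is verifying cleanly that the lift $\overline{B}$ places all the needed lower traces into $\C[\overline{B}]$ for an arbitrary Procesi parameter set $B$ (rather than just the one from Theorem~\ref{indtheorem}); handling the general case likely requires a supplementary argument using the transcendence basis $\overline{B}\cup\{\det\xb_k\}$ already in hand together with Cayley-Hamilton to express the lower traces as algebraic elements over $\C(\overline{B},\tr{\xb_1^m},\ldots,\tr{\xb_r^m})$.
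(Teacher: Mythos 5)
Your first two steps are sound, and the middle one is in fact a genuinely different route from the paper's. The paper never introduces the scaling torus: it argues that if $\C[\overline{B}][\tr{\xb_1^m},\dots,\tr{\xb_r^m}]$ were algebraic over $\C[\overline{B}]$, then after specializing $\xb_i=\id$ for $i\neq i_0$ one could locally determine $\det(\xb_{i_0})$ from $\tr{\xb_{i_0}},\dots,\tr{\xb_{i_0}^{m-1}}$, contradicting $\glm{m}\aq\SLm{m}=\C^m$. Your $T$-weight decomposition, in which $\det\xb_k$ has pure weight $me_k$ so that $\beta$ is pinned down by $\alpha$ within each weight component and each component restricts to a relation on $B$ alone, establishes the independence of $\overline{B}\cup\{\det\xb_1,\dots,\det\xb_r\}$ cleanly and without any specialization; that part is correct and arguably tighter than the paper's version.

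The genuine gap is exactly the one you flag at the end, and it is not cosmetic. Passing from $\{\det\xb_k\}$ to $\{\tr{\xb_k^m}\}$ via Newton's identities requires the lower traces $\tr{\xb_k^j}$, $1\le j\le m-1$, to be algebraic over $\C(\overline{B})$, and nothing you have proved delivers this: they are algebraic over $\C(\overline{B},\det\xb_1,\dots,\det\xb_r)$ because that set is a transcendence basis, but invoking that here is circular, since $\det\xb_k$ is precisely what you are trying to recover from $\tr{\xb_k^m}$ and the lower traces. Nor can you lift the relation witnessing that $\tr{\xb_k^j}\big|_{\X_r}$ is algebraic over $\C(B)$ through $\Pi_{\G}$, because the lifted expression lies only in the ideal $(\det\xb_1-1,\dots,\det\xb_r-1)$, and your weight trick does not apply to identities that hold only after restriction to $\X_r$. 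The paper closes this hole with a separate structural observation: $\tr{\xb_k^j}$ for $1\le j\le m-1$ is forced to appear in any set of Procesi generators, the Procesi parameters of $\X_1$ are unique (namely $\tr{\xb},\dots,\tr{\xb^{m-1}}$, as $\C[\X_1]$ is a polynomial ring on them), and specializing $\xb_i=\id$ for $i\neq k$ recovers $\C[\X_1]$ inside $\C[\X_r]$; hence every set of Procesi parameters $B$ already contains $\tr{\xb_k^j}$ for all $k$ and all $1\le j\le m-1$. Granting that, $Q_k\in\C[\overline{B}]$ on the nose and your Newton-identity step goes through. Without that observation, or some other argument that the lower traces do not raise the transcendence degree over $\C(\overline{B},\tr{\xb_1^m},\dots,\tr{\xb_r^m})$, the proof is incomplete.
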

\begin{proof}
By assumption the parameters for $\X_r$ are given by traces of words in generic matrices where all letters have positive exponents; that is in the form $\tr{\xb_{i_1}\xb_{i_2}\cdots \xb_{i_l}}$,  and the collection $B$ does not include any parameter of the form $\tr{\xb^m}$.  Then these invariants extend to invariants of $\mathfrak{Y}_r$; that is, for such generators, $\Pi_{\G}(\tr{\wb})=\tr{\wb}{\big|_{\X_r}}$.  If there was a dependence relation among these extensions to all of $\mathfrak{Y}_r$, then restricting would provide a dependence relation on all of $\X_r$, which cannot exist by assumption.  Thus $B$ pulls back to an independent set $\overline{B}$.

Let $\C[\overline{B}]$ be the subring of $\C[\Y_r]$ generated by $\overline{B}$.  The collection $\{\tr{\xb_k^{m-j}}\}$ for $1\leq j\leq m-1$ is included in $\overline{B}$, since they must be included in any set of Procesi parameters for $\X_r$; the latter following since the Procesi parameters for $\X_1$ are unique, and setting $\xb_2=\cdots=\xb_r=\id$ recovers $\C[\X_1]$ as a subring of $\C[\X_r]$.  Note that they are independent since $\glm{m}\aq \SLm{m}=\C^m$. Suppose $\C[\overline{B}][\tr{\xb^m_1},...,\tr{\xb^m_r}]\subset \C[\mathfrak{Y}_r]$ is an algebraic extension of $\C[\overline{B}]$.  Then locally, using the characteristic polynomial, we can express the determinant in terms of the other coefficients of the characteristic polynomial, which we know to be impossible generally, since the coefficients of the characteristic polynomial are already independent.  For example, by letting all elements $\xb_i$ be the identity except for one, say $\xb_{i_0}$, we would be able to locally ascertain $\mathrm{det}(\xb_{i_0})$ from $\tr{\xb_{i_0}^{m-k}}$ for $1\leq k\leq m-1$; which we cannot.  Thus, it is a transcendental extention.  Therefore, $\overline{B}\cup \{\tr{\xb^m_1},...,\tr{\xb^m_r}\}$ are algebraically independent and since they number $(m^2-1)(r-1)+r=(r-1)m^2+1=\mathrm{dim}\mathfrak{Y}_r$, they are a set of parameters for $\mathfrak{Y}_r$.
\end{proof}

We also take a moment to contrast the fact in \cite{L4} that $N_r$ minimal generators provide an affine embedding $\X_r\hookrightarrow\C^{N_r}$, where $N_r$ is minimal among all such embeddings.

Given a set of independent generators $\mathcal{I}$ of order $|\mathcal{I}|$ one can construct a morphism $\mathsf{MagTr}_{\mathcal{I}}:\X_r\to \C^{|\mathcal{I}|}$, called the {\it Magnus Trace Map} in \cite{Fl}.  This map factors through the affine embedding $\X_r\to\C^{N_r}$ with a projection when $\mathcal{I}$ is a subset (as it can always be taken to be) of a minimal set of generators defining the embedding.

It is not hard to prove

\begin{prop}
$\mathsf{MagTr}_{\mathcal{I}}$ is dominant and generically submersive.
\end{prop}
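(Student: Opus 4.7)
The plan is to prove dominance and generic submersiveness separately, each as a formal consequence of algebraic independence of $\mathcal{I}$.

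Write $\mathcal{I}=\{f_1,\ldots,f_n\}$. The Magnus trace map corresponds on coordinate rings to the homomorphism $\C[z_1,\ldots,z_n]\to \C[\X_r]$ sending $z_i\mapsto f_i$. Algebraic independence of $\mathcal{I}$ over $\C$ is, by definition, precisely the statement that this ring map has trivial kernel. Since a morphism of affine varieties is dominant if and only if its pullback on coordinate rings is injective, $\mathsf{MagTr}_\mathcal{I}$ is dominant.

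For generic submersiveness I would apply the Jacobian criterion, essentially the same one already used in the proof of Theorem \ref{indtheorem}. At an irreducible point $[\rho]\in\X_r$, and these are generic for $r\geq 2$, the variety is smooth of dimension $d=(m^2-1)(r-1)$ and any choice of local parameters $\{z_1,\ldots,z_d\}$ gives a basis of $T^*_{[\rho]}\X_r$ via their differentials. In this basis, the differential of $\mathsf{MagTr}_\mathcal{I}$ is represented by the $n\times d$ Jacobian $(\partial f_i/\partial z_j)$, and submersiveness at $[\rho]$ is the statement that this matrix has full row rank $n$, equivalently the non-vanishing of some $n\times n$ minor. If no such minor were nonzero on any Zariski-dense open set, continuity would force every $n\times n$ minor to vanish identically on $\X_r$; by the Jacobian criterion this exhibits a polynomial relation among the $f_i$, contradicting our hypothesis. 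The inequality $n\le d$ needed here is automatic, since algebraic independence forces $n\leq\mathrm{trdeg}_{\C}\C(\X_r)=d$.

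The only real bookkeeping point is that the Zariski-open locus on which the Jacobian has rank $n$ must be intersected with the (Zariski-open, nonempty) smooth locus of $\X_r$, so that ``submersive'' makes sense. Both sets are dense, hence so is their intersection; that intersection is the dense open subset on which $\mathsf{MagTr}_\mathcal{I}$ is submersive. There is no substantive obstacle beyond invoking the Jacobian criterion in the direction ``algebraically independent $\Rightarrow$ some Jacobian minor is nonzero,'' which is already implicit in the tools assembled in Section 2.
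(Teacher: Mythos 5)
Your proof is correct. The dominance half is exactly the paper's argument: algebraic independence of $\mathcal{I}$ means the pullback $\C[z_1,\dots,z_n]\to\C[\X_r]$ is injective, hence the morphism is dominant. For generic submersiveness, however, you take a genuinely different route. The paper simply invokes generic smoothness of a dominant morphism of varieties in characteristic zero (Hartshorne, Ch.~III \S 10): once dominance is known, there is automatically a dense open set on which the map is a smooth morphism of the expected relative dimension, hence a submersion. You instead re-derive this instance by hand via the Jacobian: working at a smooth point in local parameters, you argue that if every $n\times n$ minor of $(\partial f_i/\partial z_j)$ vanished identically on the irreducible variety $\X_r$, the $f_i$ would be algebraically dependent. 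The one substantive ingredient you are leaning on is the implication ``all differentials $df_1,\dots,df_n$ linearly dependent over $\C(\X_r)$ $\Rightarrow$ $f_1,\dots,f_n$ algebraically dependent,'' which is the nontrivial direction of the characteristic-zero equivalence between algebraic independence and linear independence of differentials in $\Omega_{\C(\X_r)/\C}$. That fact is standard but is not quite ``implicit in Section 2'' (Section 2 only uses the easy direction: a relation forces the Jacobian to degenerate); it deserves an explicit citation. Granting it, your argument is complete and has the mild advantage of being self-contained and explicitly exhibiting the submersion locus as the intersection of the smooth locus with the maximal-rank locus of the Jacobian, whereas the paper's citation of generic smoothness is shorter but less explicit. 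Both arguments, like the statement itself, depend essentially on characteristic zero.
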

\begin{proof}
Since the algebraically independent generators induce an injection $\C[B]\hookrightarrow \C[\X_r]$ we automatically get 
dominance (\cite{Ha}, page 81), and then this immediately implies there exists an open dense set $U\subset \X_r$ where
 $\mathsf{MagTr}_{\mathcal{I}}$ is a smooth submersion (\cite{Ha}, page 271).
\end{proof}

Suppose $\mathcal{I}\subset \{\tr{\xb_{i_1}\xb_{i_2}\cdots \xb_{i_l}}\}$ is taken to be a maximal.  For $\SLm{m}$ and $r=1$ it is surjective since it defines an isomorphism.  Surprisingly, we also have an isomorphism for $\SLm{2}$ and $r=2$ (see \cite{G9} for more about the Fricke-Klein-Vogt Theorem).  For $\SLm{3}$ and $r=2$ and for $\SLm{2}$ and $r=3$ it is a branched double cover.  It is natural to conjecture that the map is surjective in general.  However, for $\G=\SLm{2}$, Florentino \cite{Fl} recently showed that it is not surjective for $r\geq 4$.

We conjecture the same is true for $\SLm{m}$ and $r\geq 2$ for $m\geq 3$ excepting only $(m,r)=(3,2)$.  In other words, we now make 

\begin{conj}
Let $\mathcal{I}\subset\{\tr{\xb_{i_1}\xb_{i_2}\cdots \xb_{i_l}}\}$ be a set of parameters for $\X_r$.  Then $\mathsf{MagTr}_{\mathcal{I}}$ is only surjective for the cases $(m,r)=$ $(1,r),$ $(m,1),$ $(2,2),$ $(2,3),$ $(3,2).$
\end{conj}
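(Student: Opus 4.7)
The plan is to split the conjecture into two halves. The first is to verify surjectivity in the five listed cases, which is a matter of catalogue: $(1,r)$ is vacuous since $\SLm{1}$ is trivial, $(m,1)$ follows from Newton's identities realizing $\X_1\cong\C^{m-1}$ as an isomorphism on parameters $\{\tr{\xb^k}\}_{k=1}^{m-1}$, $(2,2)$ is Fricke-Klein-Vogt via \cite{G9}, and $(2,3)$ and $(3,2)$ are branched double covers onto $\C^{\dim\X_r}$ by the constructions cited in the excerpt. For these I would simply invoke the relevant references.

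The substantive content is proving non-surjectivity everywhere else. The case $\SLm{2}$ with $r\geq 4$ is Florentino's theorem \cite{Fl}, so the new cases are $\SLm{m}$ with $m\geq 3$ and $(m,r)\neq (3,2)$. My plan is a double induction with base cases $(3,3)$ and $(m,2)$ for $m\geq 4$, then propagating outward in $r$. For $(3,3)$ I would exploit the explicit generators of Theorem \ref{maintheorem} and parameters of Theorem \ref{indtheorem}: Cayley-Hamilton applied to products of $3\times 3$ generic matrices produces polynomial identities of the form $P(t_1,\dots,t_{16};\tr{w})=0$ expressing each non-parameter generator $\tr{w}$ as integral over the subring $\C[\mathcal{I}]$. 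A point $p\in\C^{16}$ at which the leading coefficient of $P$ in $\tr{w}$ vanishes while the remaining coefficients have no common root is outside the image of $\mathsf{MagTr}_\mathcal{I}$. A targeted \emph{Mathematica} search through small integer specializations should produce such a $p$.

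For the induction step $r\to r+1$ I would use the closed embedding $\X_r\hookrightarrow\X_{r+1}$ given by $\xb_{r+1}=\id$: any new trace generator involving $\xb_{r+1}$ specializes either to the constant $m$ or to a trace in $\xb_1,\dots,\xb_r$, so the restriction of $\mathsf{MagTr}_{\mathcal{I}_{r+1}}$ to this slice factors through $\mathsf{MagTr}_{\mathcal{I}_r}$ followed by an affine inclusion of targets, and any off-image point on the small side lifts to an off-image point on the large side. To handle the ``for every set of parameters'' quantifier in the conjecture, I would note that two different parameter sets $\mathcal{I},\mathcal{I}'$ yield Magnus maps related by a dominant finite polynomial change of coordinates on the common target $\C^{\dim\X_r}$, so non-surjectivity of one forces non-surjectivity of the other.

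The main obstacle is the base case $(m,2)$ for $m\geq 4$, since no analogue of Theorem \ref{maintheorem} is known for $\SLm{m}$ when $m\geq 4$; without an explicit generating set the Cayley-Hamilton route is not directly available. My workaround would be to restrict attention to low-complexity words such as $\tr{\xb_1^a\xb_2^b}$ and iterated commutator traces $\tr{[\xb_1,\xb_2]^c}$, whose Cayley-Hamilton identities can be written down explicitly, and to extract an obstruction from the semi-explicit integrality relation they produce. Whether this suffices unconditionally or must be combined with a bound on the degree of nilpotency $d(m)$ is the delicate point, and the most likely place where the argument would need to be refined.
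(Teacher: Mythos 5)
The statement you are trying to prove is stated in the paper as a \emph{conjecture}: the author offers no proof, only the evidence that the analogous non-surjectivity holds for $\SLm{2}$, $r\geq 4$ by Florentino's theorem, and that the listed cases are known to be surjective. So there is no argument in the paper to compare yours against; what you have written is a research program for an open problem, and as it stands it has several genuine gaps. The most serious is that your obstruction mechanism is internally inconsistent: you propose to use Cayley--Hamilton to express each non-parameter generator $\tr{\wb}$ as \emph{integral} over $\C[\mathcal{I}]$, i.e.\ satisfying a monic relation $P$, and then to find a point $p$ where ``the leading coefficient of $P$ in $\tr{\wb}$ vanishes.'' If the relation is monic the leading coefficient is $1$ and never vanishes, so no such $p$ exists; this is exactly why $(3,2)$ gives a (surjective) branched double cover --- the commutator trace satisfies a \emph{monic} degree-$2$ relation. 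Non-surjectivity must therefore be detected either by genuinely non-monic relations (whose existence and explicit form for $m\geq 3$, $r\geq 3$ is precisely the hard open problem the paper flags in its closing remarks) or by a more global argument of the kind Florentino uses for $\SLm{2}$. Even granting the mechanism, resting the base case $(3,3)$ on an unexecuted \emph{Mathematica} search is not a proof.

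The reductions you use to propagate non-surjectivity are also flawed. For the step $r\to r+1$ via the slice $\xb_{r+1}=\id$: an \emph{arbitrary} parameter set $\mathcal{I}_{r+1}$ for $\X_{r+1}$ (the conjecture quantifies over all of them) need not contain, or restrict on that slice to, a parameter set for $\X_r$, so the claimed factorization through $\mathsf{MagTr}_{\mathcal{I}_r}$ is not available in general. For the change-of-parameter-set argument: two transcendence bases of $\C(\X_r)$ are related by \emph{algebraic}, not polynomial, dependence, so there is in general no morphism $\phi:\C^{\dim\X_r}\to\C^{\dim\X_r}$ with $\mathsf{MagTr}_{\mathcal{I}'}=\phi\circ\mathsf{MagTr}_{\mathcal{I}}$; and even if a finite dominant $\phi$ existed, $\phi$ of a proper subset can still be everything, so non-surjectivity would not transfer. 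Finally, you concede that the base case $(m,2)$ for $m\geq 4$ is out of reach without an analogue of Theorem \ref{maintheorem}. In short: the surjective half of your plan is fine (it is a collection of citations), but the non-surjective half does not close, and the statement should be regarded as remaining a conjecture.
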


\begin{rem}
Additionally, in \cite{Fl} it is shown that a representation $\rho:\pi\to \SLm{2}$ where $\pi$ is a finitely generated group with 
generators $\{g_1,...,g_r\}$ is reducible if and only if all triples $(\rho(g_{i_1}),\rho(g_{i_2}),\rho(g_{i_3}))$ are simultaneously reducible by 
conjugation.  If we replace $\SLm{2}$ by $\SLm{m}$ and $i_3$ by $i_{d(m)}$ where $d(m)$ is the degree of nilpotency discussed in the introduction, 
then Florentino's statement remains valid.  This follows since globally all invariants are $\tr{\xb_{i_1}\xb_{i_2}\cdots \xb_{i_{d(m)}}}$, and the 
points of $\X_r$ are distinguished by such traces.  It is important to observe that it does not matter that the points in $\X_r$ correspond to 
unions of orbits, since such a union contains either one and only one orbit of an irreducible representation or it contain only reducible 
representations (in different orbits, but all reducible).
\end{rem}

\section{Closing Remarks}
In this section we take the opportunity to briefly describe our general outlook on the project that this paper, in part, contributes.

In \cite{L2} we showed the following theorem.

\begin{thm} Let $\X=\SL^{\times 2} \aq \SL$.  Then the following hold:
\begin{enumerate}
\item[$(i)$] $\C[\X]$ is minimally generated by the nine affine coordinate functions
\begin{align*}
\mathcal{G}=&\{\tr{\xb_1},\tr{\xb_2},\tr{\xb_1\xb_2},\tr{\xb_1^{-1}},\tr{\xb_2^{-1}},\tr{\xb_1\xb_2^{-1}},\\ &\tr{\xb_2\xb_1^{-1}},\tr{\xb_1^{-1}\xb_2^{-1}},\tr{\xb_1\xb_2\xb_1^{-1}\xb_2^{-1}}\}.
\end{align*}
\item[$(ii)$] The eight elements in $\mathcal{G}\backslash \{\tr{\xb_1\xb_2\xb_1^{-1}\xb_2^{-1}}\}$ are a maximal algebraically independent subset and
are local parameters.
\item[$(iii)$] $\tr{\xb_1\xb_2\xb_1^{-1}\xb_2^{-1}}$ satisfies a monic (degree 2) relation over the algebraically independent generators.  It generates the ideal.
\item[$(iv)$] $\mathrm{Out}(\F_2)$ acts on $\C[\X]$ and has an order $8$ subgroup which acts as a permutation group on the independent generators; as such distinguishes them.
\end{enumerate}
\end{thm}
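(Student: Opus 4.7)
The plan is to verify each of the four parts in turn, relying on Theorem \ref{maintheorem}, Theorem \ref{indtheorem}, and the Jacobian technique from the proof of the latter.

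For part $(i)$, I would specialize Theorem \ref{maintheorem} to $r=2$. Every binomial coefficient $\binom{2}{k}$ with $k\geq 3$ vanishes, killing all triple, quadruple, quintuple, and sextuple invariants. What remains is $2+2+1+2+1+1=9$ generators, and matching the prototypes with the pair $(\xb_1,\xb_2)$ reproduces $\mathcal{G}$ exactly; minimality in $\mathcal{G}$ is inherited from minimality in Theorem \ref{maintheorem}.

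For part $(ii)$, I would run the $r=2$ base case of the induction underlying Theorem \ref{indtheorem}. Using the $\G$-action I diagonalize $\xb_1$, reducing it to two free entries $x^1_{11},x^1_{22}$, and use the residual diagonal conjugations on $\xb_2$ to produce six further free entries, yielding an $8$-element independent coordinate set $\mathcal{V}$ on $\X_2$. I then compute the $8\times 8$ Jacobian of the eight trace functions in $\mathcal{G}\setminus\{\tr{\xb_1\xb_2\xb_1^{-1}\xb_2^{-1}}\}$ with respect to $\mathcal{V}$ and verify by a single machine-assisted evaluation at a random unimodular pair that the determinant is nonzero. Exactly as in Theorem \ref{indtheorem}, nonvanishing at a generic point rules out any polynomial relation.

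Part $(iii)$ is the main technical obstacle. Writing $W=\xb_1\xb_2\xb_1^{-1}\xb_2^{-1}$, I would argue that $\tr{W}$ and $\tr{W^{-1}}=\tr{\xb_2\xb_1\xb_2^{-1}\xb_1^{-1}}$ are the two roots of a common monic quadratic $T^2-PT+Q$ with $P=\tr{W}+\tr{W^{-1}}$ and $Q=\tr{W}\tr{W^{-1}}$. The content is that $P$ and $Q$ actually lie in the subring generated by $\mathcal{G}\setminus\{\tr{W}\}$. This is verified by repeatedly applying the Cayley--Hamilton reduction for $\SL$-matrices, $\xb^3=\tr{\xb}\xb^2-\frac{1}{2}(\tr{\xb}^2-\tr{\xb^2})\xb+\id$, to $\xb_1^{\pm 1}$ and $\xb_2^{\pm 1}$ inside the words comprising $W+W^{-1}$ and $W\cdot W^{-1}$, and converting the resulting traces of words of length at most three into elements of $\mathcal{G}\setminus\{\tr{W}\}$ via cyclic permutation. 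That this monic quadratic generates the full relation ideal then follows from $(ii)$: $\C[\X_2]$ is integral over the polynomial ring $\C[\mathcal{G}\setminus\{\tr{W}\}]$; the adjoined element $\tr{W}$ is integral of degree exactly two since $\tr{W}\neq\tr{W^{-1}}$ at generic points; hence the quadratic is the minimal polynomial of $\tr{W}$ over the polynomial subring and therefore generates the relation ideal.

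For part $(iv)$, I would exhibit the order-$8$ subgroup of $\ot(\F_2)$ as the dihedral group $D_4$ generated by the involutions $\sigma_1:\gt_1\mapsto\gt_1^{-1}$, $\sigma_2:\gt_2\mapsto\gt_2^{-1}$, and $\tau:\gt_1\leftrightarrow\gt_2$. Direct calculation using cyclicity of trace shows that $D_4$ preserves $\mathcal{G}\setminus\{\tr{W}\}$ and splits it into two size-four orbits $\{\tr{\xb_1},\tr{\xb_2},\tr{\xb_1^{-1}},\tr{\xb_2^{-1}}\}$ and $\{\tr{\xb_1\xb_2},\tr{\xb_1\xb_2^{-1}},\tr{\xb_2\xb_1^{-1}},\tr{\xb_1^{-1}\xb_2^{-1}}\}$, whereas the ninth generator $\tr{W}$ is sent by each of $\sigma_1,\sigma_2,\tau$ to $\tr{W^{-1}}\notin\mathcal{G}$. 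This is the sense in which the $D_4$-action distinguishes the eight independent generators, both from the dependent one and from each other through their distinct stabilizers.
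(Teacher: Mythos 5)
The paper does not actually prove this theorem --- it is quoted from \cite{L2} --- so there is no internal proof to compare against and your proposal must stand on its own. Parts $(i)$, $(ii)$ and $(iv)$ are handled correctly: specializing Theorem \ref{maintheorem} to $r=2$ kills every $\binom{2}{k}$ with $k\geq 3$ and leaves exactly the nine listed generators; the $8\times 8$ Jacobian computation is precisely the base case that the proof of Theorem \ref{indtheorem} defers to \cite{L2}; and your $D_4=\langle\sigma_1,\sigma_2,\tau\rangle$ does permute the eight independent generators in two orbits of size four while sending $\tr{\xb_1\xb_2\xb_1^{-1}\xb_2^{-1}}$ to $\tr{\xb_2\xb_1\xb_2^{-1}\xb_1^{-1}}\notin\mathcal{G}$.

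The gap is in part $(iii)$, at the step ``it generates the ideal.'' You claim $\tr{W}$ has degree exactly two over $\C[\mathcal{G}\setminus\{\tr{W}\}]$ ``since $\tr{W}\neq\tr{W^{-1}}$ at generic points.'' That only says the quadratic $T^2-PT+Q$ has distinct roots; it does not exclude the possibility that both roots lie in the field $\C(y_1,\dots,y_8)$ generated by the eight independent traces, in which case the quadratic would factor into linear terms, $\tr{W}$ would be rational in the others, and the quadratic would not generate the kernel. The correct way to close this is not via distinctness of roots but via minimality from $(i)$ together with $(ii)$: by $(ii)$ the eight generators form a polynomial ring $\C[y_1,\dots,y_8]$, which is integrally closed; $\tr{W}$ is integral over it (being a root of a monic quadratic with coefficients there), so if $\tr{W}$ lay in the fraction field it would lie in $\C[y_1,\dots,y_8]$ itself, contradicting the minimality of the nine generators asserted in $(i)$. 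Hence the quadratic is the minimal polynomial, is irreducible over the UFD $\C[y_1,\dots,y_8]$, and generates the height-one prime kernel of $\C[t_1,\dots,t_9]\to\C[\X]$ by the dimension count. (Equivalently, one can exhibit two closed orbits agreeing on the eight traces but not on $\tr{W}$ --- this is the ``branched double cover'' statement recorded in Section 3.) Separately, the core computational content of $(iii)$ --- that $P=\tr{W}+\tr{W^{-1}}$ and $Q=\tr{W}\tr{W^{-1}}$ actually lie in the subring generated by the other eight traces --- is only gestured at: Cayley--Hamilton for single matrices plus cyclicity does not by itself reduce traces of length-eight words; the reduction requires the polarized fundamental trace identity for $3\times 3$ matrices, and carrying it out is the substantial computation of \cite{L2}. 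As written, this portion is a plan rather than a verification.
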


In \cite{L4} we generalize part $(i)$ of this theorem to $\X_r$ by describing minimal generators for any value of $r$.  In this paper we generalize 
part $(ii)$ of this theorem again to $\X_r$ for any value of $r$.  We are currently exploring a 
generalization of part $(iv)$ to $\X_r$.  Generalizing part $(iii)$ seems to be a very hard problem.  

Recently, exciting new results using methods 
similar to those in \cite{AP} were established in \cite{BD1} concerning the ideal of relations for generic $3\times 3$ matrices.  In 
particular, the minimal degree of generators of the ideal of relations was found to be $7$ and the degree $7$ relations were then classified in 
general.  Using this we can get like results for $\X_r$, but this is just the beginning.  

It would seem that either new ideas are needed to 
solve the relations problem in general, or a massive computational project.  On the other hand, to solve the problem in general, one would like to 
know the minimal generators to work with (part $(i)$ above and its generalization), the subsets of generators that do not admit relations (part $(ii)$ 
above and its generalization), and group actions which can simplify the form of relations (part $(iv)$ above and its generalization).

\subsection*{Acknowledgments}
The author thanks his advisor Bill Goldman for introducing him to this topic and for his support and encouragement.  This work began at Kansas 
State University during the 2006-2007 academic year, and was completed at Instituto Superior T\'ecnico during the 2007-2008 academic year.  
The author thanks KSU and IST for financial support and for providing a stimulating environments to work.  We thank Mara Neusel, Frank Grosshans, 
Zongzhu Lin, and Gustavo Granja for general interest in this work.  We extend a special thanks to Carlos Florentino for interesting conversations 
and for motivating parts of Section 3.


\begin{thebibliography}{MFK94}

\bibitem[AP89]{AP}
Silvana Abeasis and Marilena Pittaluga, \emph{On a minimal set of generators
  for the invariants of {$3\times 3$} matrices}, Comm. Algebra \textbf{17}
  (1989), no.~2, 487--499. \MR{MR978487 (90d:15021)}

\bibitem[ATZ94]{ATZ}
Helmer Aslaksen, Eng-Chye Tan, and Chen-bo Zhu, \emph{Generators and relations
  of invariants of {$2\times 2$} matrices}, Comm. Algebra \textbf{22} (1994),
  no.~5, 1821--1832. \MR{MR1264744 (95c:15061)}

\bibitem[BD07]{BD1}
F.~Benanti and V.~Drensky, \emph{Defining relations of minimal degree of the
  trace algebra of {$3\times 3$} matrices}, C. R. Acad. Bulgare Sci.
  \textbf{60} (2007), no.~2, 103--110. \MR{MR2301687}

\bibitem[Dol03]{Do}
Igor Dolgachev, \emph{Lectures on invariant theory}, London Mathematical
  Society Lecture Note Series, vol. 296, Cambridge University Press, Cambridge,
  2003. \MR{MR2004511 (2004g:14051)}

\bibitem[For02]{F}
Edward Formanek, \emph{The ring of generic matrices}, J. Algebra, \textbf{258} (2002), no.~1,310--320. \MR{MR1958908 (2004a:16039)}

\bibitem[Flo06]{Fl}
Carlos A.~A. Florentino, \emph{Invariants of {$2\times2$} matrices, irreducible
  {${\rm SL}(2,{\mathbb{C}})$} characters and the {M}agnus trace map}, Geom.
  Dedicata \textbf{121} (2006), 167--186. \MR{MR2276242 (2007k:14093)}

\bibitem[Gol08]{G9}
William~M. Goldman, \emph{Trace coordinates on fricke spaces of some simple
  hyperbolic surfaces}, EMS Publishing House, Z\"urich, 2008, Handbook of
  Teichm\"uller theory II ( A. Papadopoulos, editor).

\bibitem[Har77]{Ha}
Robin Hartshorne, \emph{Algebraic geometry}, Springer-Verlag, New York, 1977,
  Graduate Texts in Mathematics, No. 52. \MR{MR0463157 (57 \#3116)}

\bibitem[Kir67]{Ki}
A. A.~ Kirillov, \emph{On certain algebras with division over the field of rational functions}, Funkcional. Anal. i Priložen \textbf{1} (1967), 
101--102. \MR{MR0207751 (34 \#7566)}

\bibitem[Law07]{L2}
Sean Lawton, \emph{Generators, relations and symmetries in pairs of $3\times 3$
  unimodular matrices}, J. Algebra \textbf{313} (2007), no.~2, 882--801.
\MR{MR2329569 (2008k:16039)}

\bibitem[Law08]{L4}
\bysame, \emph{Minimal affine coordinates for $\mathrm{SL}(3,\mathbb{C})$
  character varieties of free groups}, J. Algebra \textbf{320} (2008), no.~10, 3773--3810.
\MR{{MR2457722 (2009j:20060)}}


\bibitem[Lun75]{Lu2}
D.~Luna, \emph{Sur certaines op\'erations diff\'erentiables des groupes de
  {L}ie}, Amer. J. Math. \textbf{97} (1975), 172--181. \MR{MR0364272 (51
  \#527)}

\bibitem[Lun76]{Lu3}
Domingo Luna, \emph{Fonctions diff\'erentiables invariantes sous l'op\'eration
  d'un groupe r\'eductif}, Ann. Inst. Fourier (Grenoble) \textbf{26} (1976),
  no.~1, ix, 33--49. \MR{MR0423398 (54 \#11377)}

\bibitem[MFK94]{MFK}
D.~Mumford, J.~Fogarty, and F.~Kirwan, \emph{Geometric invariant theory}, third
  ed., Ergebnisse der Mathematik und ihrer Grenzgebiete (2) [Results in
  Mathematics and Related Areas (2)], vol.~34, Springer-Verlag, Berlin, 1994.
  \MR{MR1304906 (95m:14012)}

\bibitem[Nag64]{Na}
Masayoshi Nagata, \emph{Invariants of a group in an affine ring}, J. Math.
  Kyoto Univ. \textbf{3} (1963/1964), 369--377. \MR{MR0179268 (31 \#3516)}

\bibitem[Pro76]{P1}
C.~Procesi, \emph{The invariant theory of {$n\times n$} matrices}, Advances in
  Math. \textbf{19} (1976), no.~3, 306--381. \MR{MR0419491 (54 \#7512)}

\bibitem[Raz74]{Ra}
J. ~Razmyslov, \emph{Identities with trace in full matrix algebras over a field of characteristic zero}, Izv. Akad. Nauk SSSR Ser. Mat. \textbf{38} 
(1974), 723--756. \MR{MR0506414 (58 \#22158)}

\bibitem[Sch01]{Sch2}
Gerald~W. Schwarz, \emph{Algebraic quotients of compact group actions}, J.
  Algebra \textbf{244} (2001), no.~2, 365--378. \MR{MR1857750 (2002h:14108)}

\bibitem[Sha94]{Sh1}
Igor~R. Shafarevich, \emph{Basic algebraic geometry. 1}, second ed.,
  Springer-Verlag, Berlin, 1994, Varieties in projective space, Translated from
  the 1988 Russian edition and with notes by Miles Reid. \MR{MR1328833
  (95m:14001)}

\bibitem[Ter88]{T3}
Yasuo Teranishi, \emph{The {H}ilbert series of rings of matrix concomitants},
  Nagoya Math. J. \textbf{111} (1988), 143--156. \MR{MR961222 (90a:16017)}

\end{thebibliography}


\def\cdprime{$''$} \def\cdprime{$''$} \def\cprime{$'$} \def\cprime{$'$}
  \def\cprime{$'$} \def\cprime{$'$}
\providecommand{\bysame}{\leavevmode\hbox to3em{\hrulefill}\thinspace}
\providecommand{\MR}{\relax\ifhmode\unskip\space\fi MR }
\providecommand{\MRhref}[2]{%
  \href{http://www.ams.org/mathscinet-getitem?mr=#1}{#2}
}
\providecommand{\href}[2]{#2}

\end{document}